\documentclass[11pt]{article}

\title{Homotopy Types of Intervals in Corank-Three Higher Bruhat Orders}
\author{Daria Poliakova}
\date{}

\usepackage[a4paper,margin=2.8cm]{geometry}
\usepackage{amsmath,amssymb,amsthm}
\usepackage{mathtools}
\usepackage{graphicx}
\usepackage{enumitem}
\usepackage[hidelinks]{hyperref}

\usepackage{tikz}
\usetikzlibrary{arrows.meta,calc,positioning}

\setlist[itemize]{leftmargin=2em}
\setlist[enumerate]{leftmargin=2.2em}

\usepackage{accents}
\newlength{\dhatheight}

\newtheorem{theorem}{Theorem}[section]
\newtheorem{proposition}[theorem]{Proposition}
\newtheorem{lemma}[theorem]{Lemma}
\newtheorem{corollary}[theorem]{Corollary}
\newtheorem{conjecture}[theorem]{Conjecture}

\theoremstyle{definition}
\newtheorem{definition}[theorem]{Definition}

\begin{document}

\maketitle

\begin{abstract}
We prove Reiner's conjecture for higher Bruhat orders in corank \(3\): the facial intervals of \(B(n,n-3)\) are precisely the spherical intervals, and all other intervals are contractible.
\end{abstract}

\section*{Introduction}

Higher Bruhat orders \(B(n,d)\) were introduced by Manin and Schechtman
as higher-dimensional generalizations of the classical weak order on the
symmetric group, which is the case \(B(n,1)\)
\cite{ManinSchechtman1989}. The weak order exhibits particularly nice
topological behavior: its intervals are spherical if and only if they
correspond to faces of the permutahedron, whereas all other intervals are
contractible \cite{Bjorner1984}.

Unlike the weak order, higher Bruhat orders are not polytopal in general:
Felsner and Ziegler showed that already the cover graph of \(B(6,3)\) is
not the \(1\)-skeleton of any convex polytope
\cite[Observation~5.3]{FelsnerZiegler2001}. Nevertheless, Ziegler's
oriented-matroid description \cite{Ziegler1993} still allows one to define
so-called facial intervals, which for \(d=1\) are precisely the intervals
corresponding to faces of the permutahedron. Higher Bruhat orders can also be
described as posets of fine zonotopal tilings of cyclic zonotopes
\cite{RichterGebertZiegler1994,FelsnerZiegler2001}, and in this language,
facial intervals consist of all fine tilings refining a fixed zonotopal
subdivision.

Reiner then conjectured that these are precisely the spherical intervals
for arbitrary \(d\), whereas all other intervals are contractible
\cite{Reiner1999}. Sphericity of facial intervals is known in full
generality, while contractibility of nonfacial intervals remained
conjectural. McConville proved the full conjecture for \(d=2\)
\cite{McConville2018}.

There are two things that make general \(d\) more difficult than the
classical \(d=1\) case. First, the weak order is a lattice, whereas general
\(B(n,d)\) need not be: already \(B(6,2)\) is not a lattice
\cite{Ziegler1993}. Second, the weak order on permutations is precisely the
inclusion order on inversion sets, whereas for general \(B(n,d)\) this is
no longer true. Ziegler exhibited two elements of \(B(8,3)\) whose
inversion sets are contained in one another, but which are not comparable
in the higher Bruhat order \cite{Ziegler1993}. Felsner and Weil proved that
this phenomenon does not occur for \(d=2\): \(B(n,2)\) is ordered by
inclusion for every \(n\) \cite{FelsnerWeil2000}. This is one reason why
the case \(d=2\) is more tractable.

However, in addition to low dimensions, there is one more class of cases
where the inclusion order coincides with the higher Bruhat order. Ziegler
proved that this happens in low corank, namely when \(n-d\leq4\)
\cite[Theorem~4.5]{Ziegler1993}. Of these cases, \(n-d\leq3\), which was studied recently by Chau from an enumerative viewpoint \cite{Chau2024}, is
particularly nice, because there the lattice property also holds
\cite[Theorem~4.4]{Ziegler1993}. For poset topology, this makes the
Crosscut Theorem available. One can thus argue that the \(B(n,n-3)\) case is easier than the \(B(n,2)\) case solved by McConville.

In this note, we make use of the observations above and prove Reiner's
conjecture in corank \(3\).

\subsubsection*{Acknowledgements} I am grateful to Vadim Schechtman and Gleb Koshevoy for encouraging me to think about higher Bruhat orders, and to Eleni Tzanaki for encouraging me to think about poset topology.

\subsubsection*{AI use declaration} ChatGPT Pro (models 5.5 and 5.6 Sol) was used as search engine, proof assistant, vector graphics artist and editor. All the outputs of the proof assistant were thoroughly digested by human. Exposition in the current note is by human and for humans.

\section{Statement of Reiner's conjecture}

\subsection{Arbitrary dimension}
Higher Bruhat orders were introduced by Manin and Schechtman
\cite{ManinSchechtman1989}. We use Ziegler's equivalent description in
terms of inversion sets \cite{Ziegler1993}.

\begin{definition}\label{def:higher-bruhat}
Let \(K=\{k_1<\cdots<k_{d+2}\}\subset [n]\). Its packet is the ordered list
\[
P(K)=
\bigl(
K\setminus\{k_{d+2}\},
K\setminus\{k_{d+1}\},
\ldots,
K\setminus\{k_1\}
\bigr)
\]
of \((d+1)\)-subsets of \(K\). A set
\(I\subseteq \binom{[n]}{d+1}\) is called consistent if, for every
\(K\in\binom{[n]}{d+2}\), the intersection \(I\cap P(K)\) is a beginning
or ending segment of the packet \(P(K)\).

The elements of \(B(n,d)\) are the consistent subsets of
\(\binom{[n]}{d+1}\). The order is generated by single-step inclusions
\(I\lessdot I\cup\{A\}\), where both sets are consistent. We call \(I\)
the inversion set.
\end{definition}

There is another equivalent description in terms of extensions of cyclic
hyperplane arrangements. 

\begin{definition}\label{def:cyclic-arrangement}
Let \(m\ge 1\). Choose real numbers \(t_1<\cdots<t_n\). The cyclic
hyperplane arrangement \(X_c^{n,m}\) is the arrangement of hyperplanes
\(H_1,\ldots,H_n\) in \(\mathbb R^m\) given by
\[
H_i=
\{(x_1,\ldots,x_m):
x_1+t_i x_2+\cdots+t_i^{m-1}x_m+t_i^m=0\}.
\]
Thus the normal vector of \(H_i\) is \( (1,t_i,t_i^2,\ldots,t_i^{m-1}) \). 

The combinatorial type does not depend on the choice of the parameters
\(t_i\), only on their order.
\end{definition}

A vertex of \(X_c^{n,m}\) is the intersection of \(m\) hyperplanes. We
label the vertex \(H_{j_1}\cap\cdots\cap H_{j_m}\) by the complementary
set \([n]\setminus\{j_1,\ldots,j_m\}\).

\begin{proposition}[Ziegler]\label{prop:ziegler-extension}
Let \(r=n-d\). The higher Bruhat order \(B(n,d)\) of Definition~\ref{def:higher-bruhat} is
isomorphic to the poset of uniform extensions of the cyclic arrangement \(X_c^{n,r-1}\) from Definition~\ref{def:cyclic-arrangement} by a
new oriented pseudohyperplane, ordered by single-step inclusion of the
vertices on the negative side.

Under this isomorphism, the inversion set of an extension \(h\) is the
set of labels of vertices lying on the negative side of \(h\).
\end{proposition}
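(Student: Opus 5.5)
This is Ziegler's theorem, and I would prove it by identifying both sides with single-element extensions of an alternating oriented matroid. The link between them is the Lawrence-type duality between cyclic configurations of rank \(d+1\) and of rank \(r=n-d\).

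\textbf{Step 1: packets as sign vectors.} Let \(C^{n,d+1}\) be the alternating uniform oriented matroid of rank \(d+1\) on \([n]\). I would first recall the Manin--Schechtman/Ziegler correspondence. A consistent set \(I\subseteq\binom{[n]}{d+1}\) is the same thing as a map \(\binom{[n]}{d+1}\to\{+,-\}\) whose restriction to each packet \(P(K)\), \(K\in\binom{[n]}{d+2}\), changes sign at most once. This is exactly the condition that the map be a (uniform) single-element lifting of \(C^{n,d+1}\). The point is that the \((d+2)\)-subsets are the circuits of \(C^{n,d+1}\), whose signatures alternate. The three-term Grassmann--Plücker relations then reduce the chirotope axioms for the extended rank \(d+2\) chirotope \(\chi(A\cup\{0\})\) to the statement that along each packet the sign pattern is monotone. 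Concretely, I would set \(\chi'(A\cup\{0\})=-\) iff \(A\in I\), with suitable reorientation, and check the three-term relations on \(K\cup\{0\}\).

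\textbf{Step 2: dualize.} Liftings of an oriented matroid \(M\) correspond bijectively to single-element extensions of the dual \(M^*\). The dual of the alternating matroid \(C^{n,d+1}\) is, up to the reorientation of every other element, the alternating matroid of rank \(n-d-1=r-1\). Its realization is the arrangement \(X_c^{n,r-1}\): the normals \((1,t_i,\dots,t_i^{r-2})\) give the rank \(r-1\) alternating matroid, and the affine term \(t_i^{r-1}\) only fixes a generic affine chart, which does not change the combinatorics.

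Under duality, a basis \(A\) of \(C^{n,d+1}\) corresponds to the complementary basis \([n]\setminus A\) of the dual, that is, to the vertex \(\bigcap_{j\notin A}H_j\). This vertex carries the label \(A\) by the convention of the paper. The sign of the lifting at \(A\) becomes the side of the new pseudohyperplane \(h\) on which that vertex lies. So \(I\) is the set of labels of vertices on the negative side of \(h\), and uniformity on one side matches uniformity on the other.

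\textbf{Step 3: the order.} A single-step inclusion \(I\lessdot I\cup\{A\}\) with both sets consistent means that the two extensions differ only in the sign of the single vertex labelled \(A\). This is exactly a single-vertex flip of \(h\) across that vertex. So the generating relations on the two sides agree, and the bijection is an isomorphism of posets.

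\textbf{Main obstacle.} The delicate step is the sign bookkeeping. The first place it arises is the dual of the alternating matroid, which is alternating only after reversing the orientation of alternate elements. The second is checking that the beginning-or-ending-segment condition on packets, rather than some twisted version, is exactly what the chirotope axioms give once these reorientations are applied. I would handle this by computing everything on a single \((d+2)\)-set \(K\), which amounts to a rank \(d+2\) versus rank \(1\) minor. There the statement is the elementary fact that the sign vectors of a uniform extension of a rank one configuration of \(d+2\) points, ordered along the packet, change sign at most once. Everything else reduces to this case, because consistency is a packet-local condition and extension-ness is local on three-term Grassmann--Plücker relations.
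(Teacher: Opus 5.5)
\textbf{This is a genuine gap: the rank bookkeeping is off by one in two places, so Steps~1 and~2 are false as written.} Your overall plan can still work: encode \(I\) as a lifting of an alternating matroid, then dualize to extensions of the cyclic arrangement.

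\textbf{Step~1.} In a single-element lifting \(\tilde{M}\) of a rank-\(\rho\) oriented matroid \(M\) one has \(\tilde{M}/0=M\). So the values \(\tilde\chi(A\cup\{0\})\) with \(|A|=\rho\) are determined by \(\chi_M(A)\), up to a global sign and whatever fixed reorientation you choose. The free data of a lifting are the values \(\tilde\chi(B)\) with \(B\in\binom{[n]}{\rho+1}\). With \(\rho=d+1\), your prescription ``\(\chi'(A\cup\{0\})=-\) iff \(A\in I\)'' therefore cannot encode an arbitrary consistent \(I\). The genuine liftings of \(C^{n,d+1}\) have their free data on \((d+2)\)-sets and model \(B(n,d+1)\). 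What you need are the uniform liftings of \(C^{n,d}\): rank-\((d+1)\) chirotopes on \([n]\cup\{0\}\) with \(\tilde\chi(0,\cdot)\) fixed, and \(I=\{B\in\binom{[n]}{d+1}:\tilde\chi(B)=-\}\) after normalization. The deletion \(\tilde{M}\setminus 0\) is then a rank-\((d+1)\) oriented matroid on \([n]\) that equals \(C^{n,d+1}\) at \(I=\varnothing\). That is presumably where \(C^{n,d+1}\) crept in.

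\textbf{Step~2.} Correspondingly, the dual is \((C^{n,d})^*\simeq C^{n,n-d}=C^{n,r}\), of rank \(r\), not \(r-1\). The claim that the constant term \(t_i^{r-1}\) ``only fixes a generic affine chart'' is wrong: it is the homogenizing coordinate. So \(X_c^{n,r-1}\subset\mathbb{R}^{r-1}\) is the affine picture, in the chart \(x_r=1\), of the central arrangement with normals \((1,t_i,\ldots,t_i^{r-1})\in\mathbb{R}^r\), which realizes \(C^{n,r}\). In a central rank-\((r-1)\) arrangement any \(r-1\) hyperplanes meet only at the origin, so there would be no vertices \(\bigcap_{j\notin A}H_j\) to label. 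The vertices of \(X_c^{n,r-1}\) are the cocircuits of \(C^{n,r}\), with zero set of size \(r-1\) and support of size \(d+1\); this is why the labels match. Your two errors cancel only in the final labels.

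\textbf{Local model.} The same slip affects it. It is not a rank-one configuration, whose uniform extensions carry a single bit. It is the rank-\(2\) contraction \(C^{n,r}/T\) with \(|T|=r-2\), on the \((d+2)\)-set \(K=[n]\setminus T\); dually, the rank-\(d\) restriction \(C^{n,d}|_K\). The vertices on the coline \(\bigcap_{t\in T}H_t\) are labelled exactly by the members of \(P(K)\), and they occur along it in packet order. Las Vergnas' localization then says a uniform extension changes sign at most once along them.

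Once this is corrected, the duality becomes unnecessary. Applying localization directly to extensions of \(C^{n,r}\) gives the packet condition immediately and avoids most of the sign bookkeeping you flagged. The paper itself gives no proof here and only cites Ziegler's theorem.
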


This is \cite[Theorem~4.1]{Ziegler1993}.

This description singles out a special class of intervals.

\begin{definition}\label{def:facial-interval}
An interval \([I,J]\) in \(B(n,d)\) is called facial if it is the set of all
uniform extensions refining some fixed non-uniform extension.
\end{definition}

This is the extension-space definition of facial intervals from \cite[Section~6]{Reiner1999}.

For an interval \([I,J]\), let \(\Delta(I,J)\) denote the simplicial complex
whose simplices are the chains in the open interval \((I,J)\). We call
\([I,J]\) spherical if \(\Delta(I,J)\) is homotopy equivalent to a sphere,
and contractible if \(\Delta(I,J)\) is contractible. We regard the empty
complex as \(S^{-1}\).

\begin{conjecture}[Reiner]\label{conj:reiner}
Let \(I < J\). If \([I,J]\) is facial, then it is spherical. If \([I,J]\) is not facial,
then it is contractible.
\end{conjecture}

This is \cite[Conjecture~6.9]{Reiner1999}.

The first assertion is known: facial intervals are products of smaller higher
Bruhat orders and have the expected spherical homotopy type~\cite[after Conjecture~6.9]{Reiner1999}. Thus only the contractibility assertion in Conjecture~\ref{conj:reiner} remains to be proved.

\subsection{The corank 3 case}

We now specialize to \(d=n-3\). The cyclic arrangement \(X_c^{n,2}\) is an
arrangement of \(n\) lines in \(\mathbb R^2\). A uniform extension is obtained
by adding an oriented pseudoline \(h\) which meets every old line once and
avoids all old intersection points.

We label the intersection of lines \(i\) and \(j\) by \(ij\). It is convenient
to use this complementary notation instead of the corresponding
\((n-2)\)-subset \([n]\setminus\{i,j\}\). The extension \(h\) is then recorded
by the graph
\[
G_h=\{ij:ij\text{ lies on the negative side of }h\}.
\]

\begin{definition}\label{def:coconsistent}
A graph \(G\subseteq\binom{[n]}2\) is coconsistent if, for every \(i\in[n]\),
the neighbors of \(i\) form a beginning or ending segment of
\(1<\cdots<i-1<i+1<\cdots<n\).
\end{definition}

By Proposition~\ref{prop:ziegler-extension}, the uniform extensions of \(X_c^{n,2}\), and hence the elements of \(B(n,n-3)\), are precisely the coconsistent graphs of Definition~\ref{def:coconsistent}.

A non-uniform extension is of one of three types:
\begin{enumerate}
    \item the new pseudoline passes through some old intersection points, no two
    lying on the same old line;
    \item the new pseudoline is a repeated copy of one of the old lines;
    \item the new element is a loop.
\end{enumerate}

The first two types of non-uniform extensions together with intervals of their uniform refinements are shown in Figure~\ref{fig:nonuniform-extensions}, for \(n=5\). The uniform refinements of the loop extension are {\em all} the uniform extensions; the picture of full \(B(5,2)\) is Figure 3 in \cite{Ziegler1993}.

 Reiner's conjecture then asserts that the three types of non-uniform extensions above give rise to spherical intervals, whereas all other intervals are contractible. 

\begin{figure}[t]
    \centering

    \includegraphics[width=.74\textwidth]
    {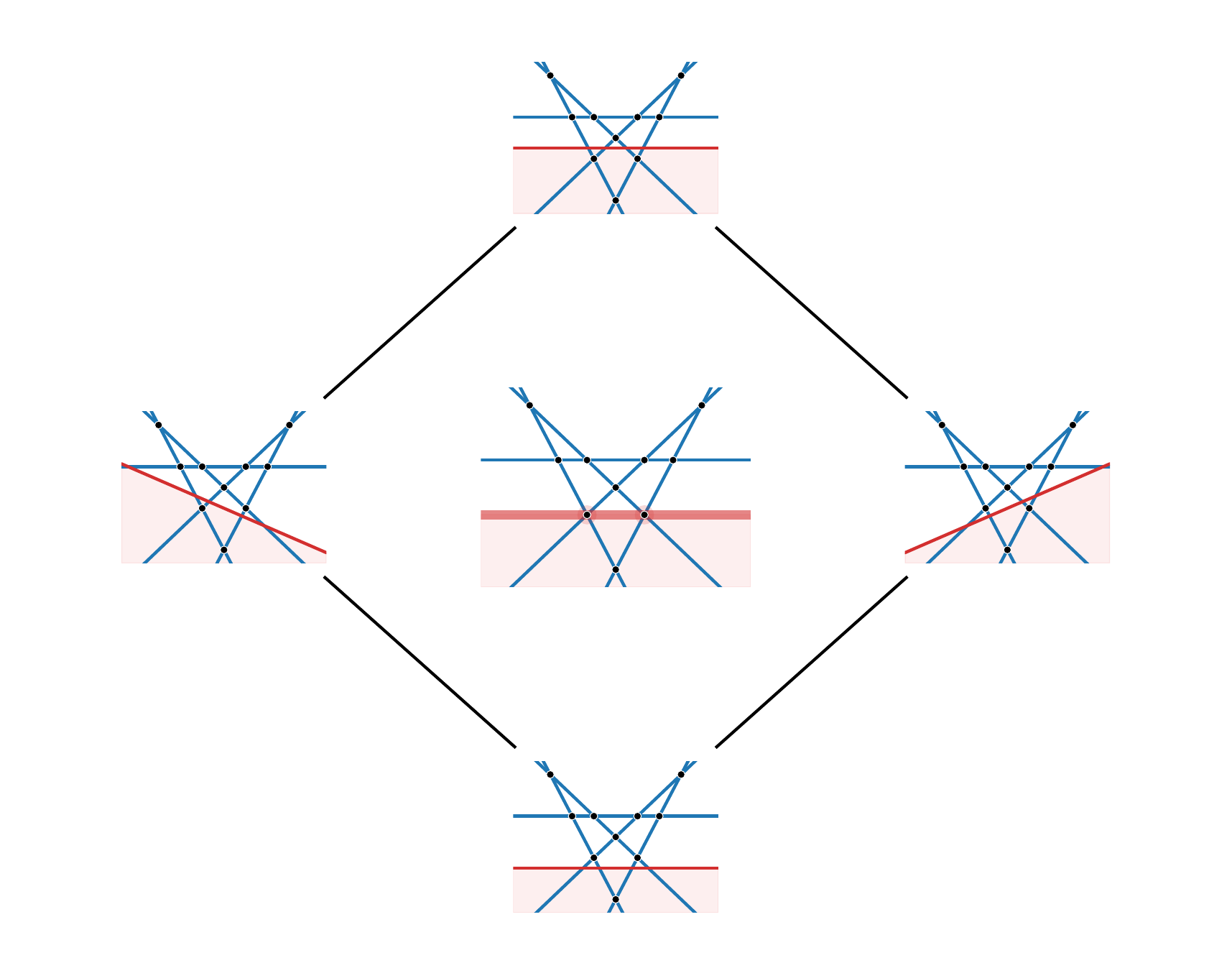}

    \vspace{1em}

    \includegraphics[width=.68\textwidth]
    {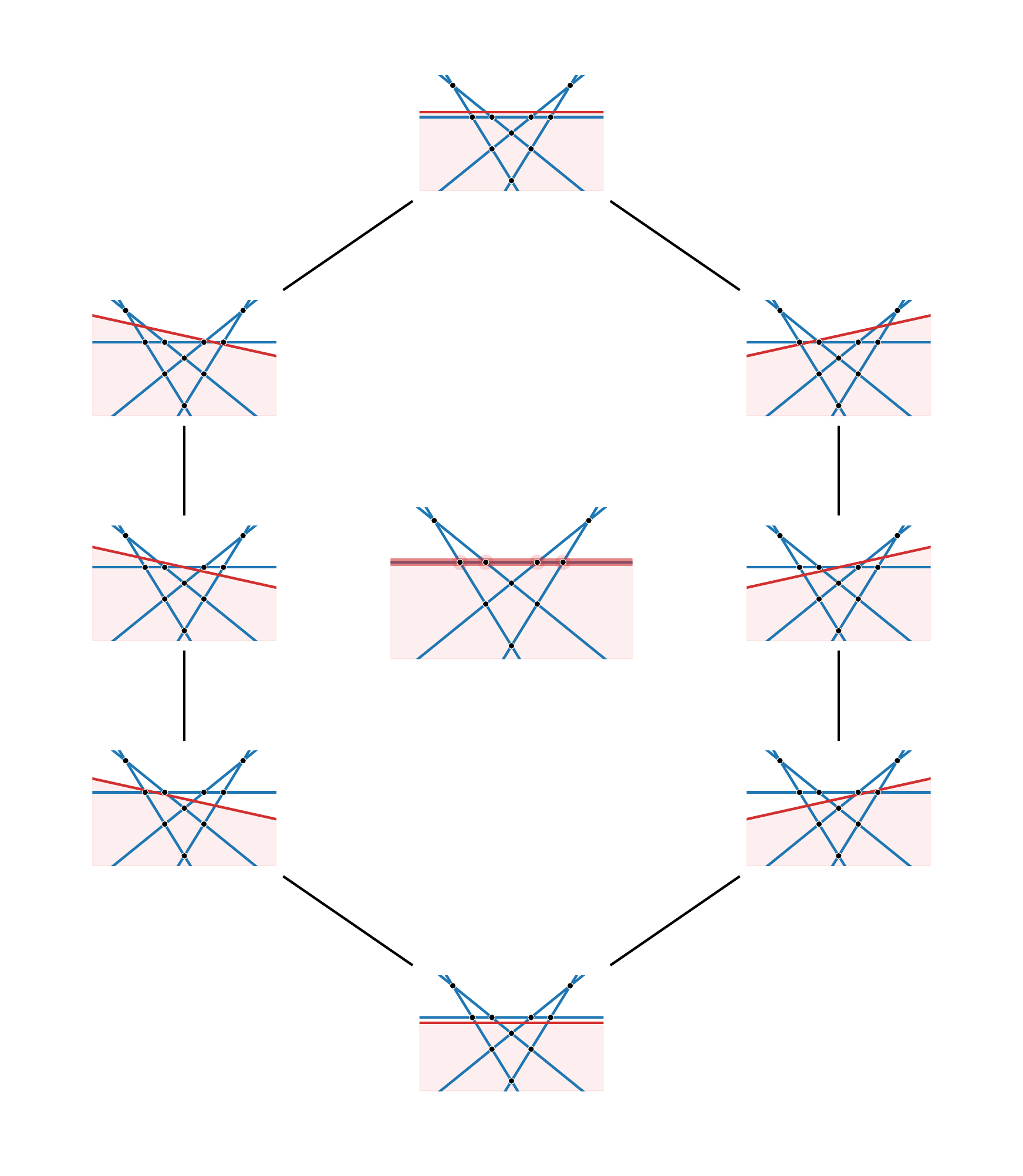}

    \caption{Two facial intervals in \(B(5,2) \), corresponding to uniform refinements of two non-uniform extensions.
    In each picture the non-uniform extension is shown in the center.
    Top: the new pseudoline passes through two old vertices.
    Bottom: the new pseudoline is a repeated copy of an old line.}
    \label{fig:nonuniform-extensions}
\end{figure}

\section{Four useful results}

In this section, we recall the four external results that we use in our proof.

\begin{proposition}[Localization theorem]\label{prop:localization}
A labeling of the vertices of a line arrangement by \(+\), \(0\), and \(-\)
comes from a single-element extension if and only if, on every old line, it
has one of the forms
\[
-\cdots-+\cdots+,\qquad
-\cdots-0+\cdots+,
\]
or the reverse of one of these, or consists entirely of zeros.
\end{proposition}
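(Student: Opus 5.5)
The statement is Las Vergnas' characterization of single-element extensions via localizations, specialized to rank \(3\). I would prove it directly in the topological picture, using the pseudoline representation of the uniform extensions described above. It is cleanest to pass to the projective (or spherical) closure of the arrangement \(X_c^{n,2}\). There every old line is a closed pseudoline and every vertex appears with its antipode carrying the opposite sign. The affine statement about sequences of vertices on each old line, read left to right, is then the restriction of a statement about cyclic sign sequences along closed curves.

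\emph{Necessity.} Let \(h\) be a single-element extension, either a pseudoline or a loop. If it is a loop, every vertex gets \(0\). If \(h\) is a copy of an old line \(\ell\), then \(\ell\) itself is all zeros. On every other old line \(\ell\), the vertices are labeled \(0\) exactly where they lie on \(h\). Since two distinct pseudolines of an arrangement meet exactly once, \(h\cap \ell\) is a single point. That point is either a vertex of \(\ell\), labeled \(0\), or an interior point of an edge of \(\ell\). Along the affine part of \(\ell\), the vertices before this crossing point lie on one side of \(h\) and those after it lie on the other side. This gives exactly the patterns \(-\cdots-+\cdots+\) and \(-\cdots-0+\cdots+\), or their reverses. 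Degenerate cases, where one side is empty, are included.

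\emph{Sufficiency.} Suppose a labeling \(\sigma\) satisfies the condition on every line. The all-zero case is the loop. If \(\sigma\) vanishes identically on one line, the pattern condition on the other lines, each of which has at most one zero, should force \(\sigma\) to be the sign vector of that line; we then take a parallel copy. Otherwise each old line \(\ell\) carries exactly one sign change, possibly through a zero vertex. Let \(p_\ell\) be that zero vertex, or else a chosen point in the interior of the edge between the last \(-\) and the first \(+\); at infinity, \(p_\ell\) is the corresponding point of the closed curve. We then build \(h\) face by face. For each \(2\)-cell \(F\) of the arrangement, we look at the cyclic sign sequence of \(\sigma\) around \(\partial F\), including the points \(p_\ell\) on its edges. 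If \(\partial F\) has \(0\) sign changes, \(F\) lies on one side of \(h\). If it has exactly \(2\), we join the two crossing points by an arc inside \(F\).

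The union of these arcs is a \(1\)-manifold. It meets each old line exactly once, at \(p_\ell\), and crosses transversally wherever \(p_\ell\) is not a vertex. At a zero vertex, the local pattern \(-0+\) on every line through it ensures the arcs pass straight through. This gives a pseudoline inducing \(\sigma\), provided two facts hold.

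\emph{Main obstacle.} The hard step is proving those two facts.
\begin{enumerate}
\item Every face boundary has at most \(2\) sign changes. Since the face is a cell, the number of changes is even, so the task is to exclude \(4\) or more.
\item The resulting curve is connected, with no closed component disjoint from the rest.
\end{enumerate}
For the first, I would argue by contradiction. Suppose \(\partial F\) has four alternating sign blocks. Extend the edges of \(F\) that carry the changes to their full lines. Two of these lines cross at a vertex outside \(F\). Tracing the signs along one of these lines from \(F\) to that vertex, I expect to find a second sign change on it, contradicting the pattern condition. Here the antipodal symmetry of the projective closure is what makes the argument go through near infinity.

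For connectivity, each arc component is a closed curve in the projective plane crossing every line an odd number of times. Two such components would have to cross each other, which is impossible since they are disjoint. If this direct argument becomes unwieldy, the fallback is to invoke Las Vergnas' localization theorem for rank-\(3\) oriented matroids. In rank \(3\) the rank-\(2\) contractions are exactly the old lines, so his condition reduces verbatim to the pattern condition above.
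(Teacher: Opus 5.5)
Your fallback is exactly what the paper does. The paper gives no argument of its own: it quotes Las Vergnas' localization theorem in rank \(3\) (Theorem~7.1.8 of Bj\"orner et al.). One small point: the reduction is not quite ``verbatim''. Las Vergnas' condition concerns the cyclic sequence of all cocircuits on the great circle \(\ell\), with \(\sigma(-Y)=-\sigma(Y)\). The affine vertices of \(\ell\) form one half of that cycle and their negatives form the other half. You must check that the rank-\(2\) localizations restrict on that half to exactly the listed linear patterns. These localizations are: all zero; one antipodal zero pair separating a \(+\) semicircle from a \(-\) semicircle; or two antipodal sign changes. The check is easy, and it is where the ``at infinity'' cases enter.

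Your direct topological construction is a genuinely different, self-contained rank-\(3\) route. It trades the general oriented-matroid machinery for the Folkman--Lawrence representation and \(\mathbb{R}P^2\) topology. It can be completed, but as written its two key facts are not established, and one justification is wrong.

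\emph{Face bound.} ``Two of these lines'' must be the lines through two non-consecutive crossings; for consecutive ones there is no contradiction. Label the crossings around \(\partial F\) as \(q_a,q_b,q_c,q_d\), with \(q_a\in e_a\subset\ell_a\) and \(q_c\in e_c\subset\ell_c\). Assume \(+\) holds on the arc from \(q_a\) to \(q_b\) and \(-\) on the arc from \(q_b\) to \(q_c\). Then \(F\) lies in one lune cut out by \(\ell_a\) and \(\ell_c\). Let \(x\) be the corner of that lune on the side of the boundary arc through \(q_b\).
\begin{itemize}
\item The lune's side on \(\ell_a\) is an open semicircle containing \(q_a\), hence not \(-q_a\). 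So walking from \(q_a\) to \(x\) along \(\ell_a\) stays in the half containing the \(+\) endpoint of \(e_a\), forcing \(\sigma(x)=+\).
\item The analogous walk from \(q_c\) along \(\ell_c\) passes the \(-\) endpoint of \(e_c\), forcing \(\sigma(x)=-\).
\end{itemize}
The contradiction is thus a clash of two lines at their common vertex. When a crossing is a zero vertex, the line at that corner must be chosen appropriately.

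\emph{Connectivity.} Your claim that each component crosses every line an odd number of times is false as stated. If the curve \(C\) had two components, each line would be crossed by only one of them. What is true is the following. Every component contains some crossing point \(p_\ell\). Since \(C\cap\ell=\{p_\ell\}\) in \(\mathbb{R}P^2\), that component meets \(\ell\) exactly once transversally, so it is one-sided. Two disjoint one-sided curves in \(\mathbb{R}P^2\) cannot exist.

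\emph{Global orientation.} You also need that \(h\) induces \(\sigma\) itself, and not \(\sigma\) with signs flipped on some lines. On each \(\ell\), the two arcs of \(\ell\setminus\{\pm p_\ell\}\) lie on opposite sides of \(h\). Lines meeting in a nonzero vertex are therefore oriented consistently. Two lines meeting in a zero vertex are both met by a third line in nonzero vertices, since the arrangement is simple, so one global orientation works.
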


This is the rank-three specialization of Las Vergnas'
characterization of single-element extensions \cite{LasVergnas1978};
see also \cite[Theorem~7.1.8]{BLSWZ1999} for the modern formulation.

\begin{proposition}[Poset properties of \(B(n,n-3)\)]\label{prop:lattice}
The order on \(B(n,n-3)\) is inclusion of inversion sets, and
\(B(n,n-3)\) is a lattice.
\end{proposition}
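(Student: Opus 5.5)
Since the order is generated by single-step additions, it is automatically contained in inclusion of inversion sets. So for the first assertion only the converse needs proof: whenever \(G\subsetneq G'\) are coconsistent graphs (Definition~\ref{def:coconsistent}), there is an edge \(ij\in G'\setminus G\) with \(G\cup\{ij\}\) coconsistent. Iterating this produces a saturated chain from \(G\) to \(G'\).

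I will work in the geometric picture of Proposition~\ref{prop:ziegler-extension}, with \(G\) and \(G'\) realized by pseudolines \(h,h'\) in the arrangement \(X_c^{n,2}\). By the Localization theorem (Proposition~\ref{prop:localization}), on each old line \(i\) the vertices of \(G\) form an initial or final run, and likewise for \(G'\). Adding \(ij\) keeps coconsistency exactly when, on both line \(i\) and line \(j\), the vertex \(ij\) is the first vertex of \(G'\setminus G\) adjacent to the run of \(G\). Call such a vertex \emph{free}.

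First I handle the degenerate case where a neighborhood flips from a prefix to a suffix. A prefix contained in a suffix, or the reverse, forces one of them to be empty or everything. A short case check therefore shows that on each line the set \(N_{G'}(i)\setminus N_G(i)\) is a contiguous interval attached to the run of \(G\) at one end. So each line \(i\) touching \(G'\setminus G\) has at most one (generically exactly one) candidate vertex, namely the one nearest to \(h\).

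The key step is to show that some vertex is a candidate on both of its lines. Define a directed graph on \(D=G'\setminus G\) by \(ij\to ik\) if \(ik\) lies strictly between \(h\) and \(ij\) along line \(i\). A vertex of \(D\) is free exactly when it is a sink. Since \(D\) is finite, it suffices to show that this relation is acyclic.

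For acyclicity I would use planarity of the region between the pseudolines \(h\) and \(h'\). Every vertex of \(D\) lies in this region. Following an arrow \(ij\to ik\) moves along line \(i\) strictly toward \(h\). Two lines cross only once, and \(h\) meets each line once. Hence a directed cycle would force some pair of lines to cross twice inside the region, or some line to meet \(h\) twice. This is the main obstacle. If a clean planarity argument is hard to write, I would instead replace it by a numerical potential, such as the number of vertices of \(D\) separating a vertex from \(h\), and show that this potential strictly decreases along arrows.

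For the lattice property, the poset is finite and has a bottom \(\emptyset\) and a top \(\binom{[n]}2\) (both coconsistent). Hence it suffices to construct meets. Given coconsistent \(G,H\), I would show that the coconsistent subgraphs of \(F=G\cap H\) have a unique maximal element. My plan is a peeling procedure. While some vertex \(i\) has \(N_F(i)\) not a segment, delete from \(F\) the edges of \(i\) that are not in the largest segment of \(N_F(i)\) compatible with both \(N_G(i)\) and \(N_H(i)\). The first thing to try is to check that every coconsistent subgraph of \(F\) survives each deletion step. Such a subgraph must avoid the deleted edges, since its neighborhood at \(i\) lies inside both a prefix or suffix of \(G\) and one of \(H\). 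By the first part, this argument is only needed for inclusion, and it yields the meet \(G\wedge H\).
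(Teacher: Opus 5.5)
The paper does not prove this proposition; it cites Ziegler's Theorems~4.4 and~4.5. Your argument therefore has to stand on its own, and both halves have a genuine gap. The reductions themselves are fine: the single-step order is contained in inclusion, and a finite bounded poset with meets is a lattice.

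\emph{Order equals inclusion.} The whole content is producing an edge of $D=G'\setminus G$ that is addable at both endpoints. This is exactly the step you leave open: acyclicity is called ``the main obstacle'' and is supported only by a heuristic and an unspecified potential.

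Your reduction to sinks is also false as stated, because ``free iff sink'' fails when $N_G(i)=\varnothing$. In that case, which $D$-vertex on line $i$ is nearest to $h$ depends on how $h$ is drawn. Take $n=4$, $G=\varnothing$, $G'=\{13,14\}$. Draw $h$ near infinity so that it meets each line $i$ before its first vertex in the order $i1,i2,\dots$; this is a legitimate drawing of $G=\varnothing$. Then $14\to 13$ along line $1$, so $13$ is the only sink. Yet $\{13\}$ is not coconsistent, since $N(1)=\{3\}$; the addable edge is $14$.

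The ``region between $h$ and $h'$'' presupposes a common drawing of two different extensions with $h'$ on the positive side of $h$. Nothing you say provides this. For comparable $G\subseteq G'$, such a drawing is essentially what a monotone sweep from $h$ to $h'$ produces, so assuming it is close to assuming the conclusion.

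A workable version would proceed as follows.
\begin{enumerate}
\item Use $h$ alone.
\item Ensure that on every line the $D$-vertices form the run starting at $h$. This is automatic if $N_G(i)\neq\varnothing$ and needs a separate argument otherwise.
\item Prove that a free vertex exists, for example by shrinking triangles. Suppose $ij$ is first on line $i$ but not on line $j$. Then the first $D$-vertex $jk$ on line $j$ lies on the triangle cut off by $h$ and lines $i,j$. Line $k$ cannot leave that triangle through line $i$, since that would give a $D$-vertex on line $i$ closer to $h$ than $ij$. So it leaves through $h$, and the triangle at $jk$ is strictly smaller.
\end{enumerate}

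\emph{Lattice property.} Your peeling step is not forced when $N_G(i)=N_H(i)=[n]\setminus\{i\}$. This is the only case in which $N_F(i)$ can contain both a nonempty prefix and a nonempty suffix. Both maximal segments are then compatible with $N_G(i)$ and $N_H(i)$, and your justification does not choose between them.

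For a concrete instance, let $n=7$, $G=\binom{[6]}{2}\cup\{67\}$, and let $H$ be all edges meeting $\{6,7\}$. Both are coconsistent, and $F=G\cap H=\{16,26,36,46,56,67\}$.
\begin{itemize}
\item Peeling vertex $5$ first legitimately deletes $56$, because $N_F(5)=\{6\}$ contains no nonempty segment.
\item Now $N_F(6)=\{1,2,3,4,7\}$ has two maximal segments, $\{1,2,3,4\}$ and $\{7\}$.
\item The meet is $\{67\}$, since each of $1,\dots,5$ must lose its only edge. So the correct choice is the smaller segment.
\item Keeping $\{1,2,3,4\}$ destroys a coconsistent subgraph, and the procedure ends at $\varnothing$.
\end{itemize}

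To repair this, you must peel only at vertices where the admissible maximal segment is unique. You must then prove that the process can never stall with only such ambiguous vertices left. That is a global statement about coconsistent graphs, and your proposal does not address it.
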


This is \cite[Theorems~4.4 and~4.5]{Ziegler1993}.

\begin{proposition}[Crosscut theorem]\label{prop:crosscut}
Let \([I,J]\) be an interval in a finite lattice, and let \(A\) be its set of
atoms, i.e. elements covering \(I\). Define the crosscut simplicial complex
\[
\Gamma(I,J)=\{S\subseteq A:\bigvee S<J\}.
\]
Then \(\Delta(I,J)\) and \(\Gamma(I,J)\) are homotopy equivalent.
\end{proposition}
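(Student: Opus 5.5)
The plan is to deduce the statement from the Nerve Lemma, using a cover of \(\Delta(I,J)\) by cones indexed by the atoms. Write \((I,J)\) for the open interval, and for each atom \(a\in A\) put
\[
U_a=\{x\in(I,J): x\ge a\}.
\]
Let \(\Delta(U_a)\subseteq\Delta(I,J)\) be the full subcomplex of chains contained in \(U_a\). I will show three things in turn: these subcomplexes cover \(\Delta(I,J)\); every nonempty intersection of them is contractible; and the nerve of the cover is exactly \(\Gamma(I,J)\). The Nerve Lemma (for a finite cover of a simplicial complex by subcomplexes with all nonempty finite intersections contractible) then gives \(\Delta(I,J)\simeq\Gamma(I,J)\).

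\emph{Covering.} Let \(\sigma\) be a chain in \((I,J)\) with minimum element \(x\). Since \(x>I\) and the lattice is finite, some atom \(a\) satisfies \(I\lessdot a\le x\). Every element of \(\sigma\) is then \(\ge a\), so \(\sigma\in\Delta(U_a)\).

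\emph{Intersections.} Let \(S\subseteq A\) be nonempty. A chain lies in \(\bigcap_{a\in S}\Delta(U_a)\) if and only if each of its elements is above every \(a\in S\), that is, lies in
\[
\bigcap_{a\in S}U_a=\{x\in(I,J): x\ge \textstyle\bigvee S\}.
\]
Here the lattice property is used, since it guarantees that \(\bigvee S\) exists. So the intersection of the subcomplexes is the order complex of \(\{x\in(I,J):x\ge\bigvee S\}\).

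Because \(S\) is nonempty, \(\bigvee S\ge a>I\) for any \(a\in S\), and \(\bigvee S\le J\) since all atoms lie in \([I,J]\). This gives two cases:
\begin{itemize}
\item If \(\bigvee S<J\), then \(\bigvee S\) itself lies in \((I,J)\) and is the minimum of the set. The order complex of a poset with a minimum is a cone, hence contractible.
\item If \(\bigvee S=J\), the set is empty, so the intersection is empty.
\end{itemize}
Hence the nerve of the cover, whose simplices are the nonempty \(S\) with nonempty intersection, is precisely \(\{S\subseteq A:\bigvee S<J\}\setminus\{\emptyset\}\), which is \(\Gamma(I,J)\) as a simplicial complex.

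\emph{Degenerate case and obstacle.} If \(I\lessdot J\), then \(A=\{J\}\) and both complexes are empty, consistent with the convention that the empty complex is \(S^{-1}\). The only step that needs care is the intersection computation: it must use the \emph{full} subcomplexes \(\Delta(U_a)\), so that intersecting them commutes with intersecting the underlying vertex sets, and it must use that joins exist in the lattice. Once these two points are checked, the Nerve Lemma finishes the proof.
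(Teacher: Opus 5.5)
Your proof is correct. The full subcomplexes \(\Delta(U_a)\) cover \(\Delta(I,J)\), every nonempty intersection is a cone with apex \(\bigvee S\), and the nerve is exactly \(\Gamma(I,J)\), so the Nerve Lemma applies (with \(I\lessdot J\) handled separately, as you do).

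The paper gives no proof of its own but cites Bj\"orner's Theorem~10.8. The standard proof of that theorem is this same Nerve Lemma argument, and yours is its specialization to the crosscut of atoms, which is the only case the paper uses.
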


See \cite[Theorem~10.8]{Bjorner1995}.

\begin{proposition}[Ascent-block theorem]\label{prop:ascent-block}
For \(X\in B(n,d)\), set \(\operatorname{Asc}(X)=\{I\in\binom{[n]}{d+1}\setminus X:X\cup\{I\}\in B(n,d)\}\). Two ascents are called adjacent if their intersection has size \(d\). There is a decomposition of \(\operatorname{Asc}(X)\) into blocks of adjacent ascents:
\[
\operatorname{Asc}(X)=A_1\sqcup\cdots\sqcup A_N
\]
such that each \(A_t\) has the form
\[
A_t=
\bigl\{
\{a^t_1,\ldots,a^t_{d+1}\},
\{a^t_2,\ldots,a^t_{d+2}\},
\ldots,
\{a^t_{r_t},\ldots,a^t_{d+r_t}\}
\bigr\},
\]
where \(a^t_1<\cdots<a^t_{d+r_t}\). If \(I\in A_s\) and \(J\in A_t\) with \(s\neq t\), then \(|I\cap J|<d\).
\end{proposition}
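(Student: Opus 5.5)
The plan is to read off the block structure directly from Definition~\ref{def:higher-bruhat}. Call two ascents adjacent when they share \(d\) elements, and take the blocks \(A_t\) to be the connected components of the resulting adjacency graph on \(\operatorname{Asc}(X)\). With this choice, the last assertion is automatic. If \(I\in A_s\) and \(J\in A_t\) with \(s\neq t\), then \(I\neq J\), so \(|I\cap J|\le d\). Equality would make \(I\) and \(J\) adjacent and hence put them in the same component. So \(|I\cap J|<d\). What remains is to show that each component has the stated sliding-window shape.

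\emph{Step 1: local analysis of a single packet.} Let \(I,J\) be adjacent ascents and put \(K=I\cup J=\{k_1<\cdots<k_{d+2}\}\). Consistency of \(X\) makes \(X\cap P(K)\) a beginning or an ending segment of \(P(K)\).
\begin{itemize}
\item If this segment is a proper nonempty beginning segment, the only element that can be added while keeping a beginning or ending segment is the next one in the packet.
\item The same holds, symmetrically, for a proper nonempty ending segment.
\item If the segment is all of \(P(K)\), nothing can be added.
\end{itemize}
Since \(I\) and \(J\) are two distinct elements of \(P(K)\) that can each be added, we must have \(X\cap P(K)=\emptyset\). In that case the only addable elements are the first and the last entries of the packet, so \(\{I,J\}=\{K\setminus\{k_{d+2}\},\,K\setminus\{k_1\}\}\). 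Consequently, if \(I=\{a_1<\cdots<a_{d+1}\}\) and \(J\) is adjacent to \(I\), then either \(J=I\setminus\{a_1\}\cup\{x\}\) with \(x>a_{d+1}\) (a right neighbour), or \(J=I\setminus\{a_{d+1}\}\cup\{x\}\) with \(x<a_1\) (a left neighbour).

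\emph{Step 2: uniqueness of neighbours.} Suppose \(I\) had two right neighbours \(J=I\setminus\{a_1\}\cup\{x\}\) and \(J'=I\setminus\{a_1\}\cup\{y\}\) with \(a_{d+1}<x<y\). Then \(J\) and \(J'\) are adjacent ascents with union \(K'=I\setminus\{a_1\}\cup\{x,y\}\). By Step 1 they must be \(K'\) minus its minimum and \(K'\) minus its maximum. But \(J'=K'\setminus\{x\}\), and \(x\) is neither the minimum \(a_2\) nor the maximum \(y\) of \(K'\). This is a contradiction. The symmetric argument shows there is at most one left neighbour. Also, \(J\) is the right neighbour of \(I\) exactly when \(I\) is the left neighbour of \(J\).

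\emph{Step 3: assembling the blocks.} By Steps 1 and 2, the adjacency graph has maximum degree at most two, and every edge is oriented from left to right. The minimum element strictly increases along each oriented edge, so there are no cycles, and each component is a directed path \(I_1\to I_2\to\cdots\to I_{r}\). Step 1 shows that each \(I_{j+1}\) is obtained from \(I_j\) by deleting its minimum and appending a new element larger than its maximum. Concatenating, we get a single increasing sequence \(a_1<\cdots<a_{d+r}\) with \(I_j=\{a_j,\ldots,a_{j+d}\}\), which is exactly the stated form.

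I expect the only real point of care to be the packet analysis in Step 1. One must check that a nonempty proper segment admits exactly one addable element, and in particular handle the case where the segment is both beginning and ending, which happens only when it is empty or full. This analysis is also what forces the neighbour uniqueness in Step 2. Everything after that is bookkeeping.
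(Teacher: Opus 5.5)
Your proof is correct for \(d\ge 1\). It differs from the paper only in that the paper gives no argument here: it imports the statement as McConville's Lemma~3.1 and uses it as a black box in Proposition~\ref{prop:block-types}.

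You derive it directly from Definition~\ref{def:higher-bruhat}, in three moves:
\begin{itemize}
\item The single-packet observation does the real work: two distinct addable members of \(P(K)\) force \(X\cap P(K)=\varnothing\) and must be its first and last entries.
\item This gives each ascent at most one left and at most one right neighbour. So every component of the adjacency graph is a directed path along which the minimum strictly increases, which is exactly the sliding-window form.
\item Taking the blocks to be connected components makes the separation property \(|I\cap J|<d\) automatic.
\end{itemize}
Your approach makes the note self-contained and checks that the lemma holds exactly as it is stated and used. The citation buys brevity.

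Two small points to tighten.
\begin{itemize}
\item Step~2 uses the element \(a_2\), so it needs \(d\ge 1\). For \(d=0\) the step really does fail: every pair of distinct singletons is adjacent, so, for example, \(\{1\}\) has both \(\{2\}\) and \(\{3\}\) as right neighbours when \(X=\varnothing\) in \(B(3,0)\). The statement still holds there trivially, with a single block. Since the paper only needs \(d=n-3\ge 1\), simply state this assumption.
\item In Step~3, justify ``no cycles'' from the degree bound, not from the increasing minimum alone. In a graph where every vertex has at most one left and at most one right neighbour, a cycle component must be a directed cycle, which the increasing minimum excludes. The same bound is what forces each path component to be consistently oriented: a pattern \(I_1\to I_2\leftarrow I_3\) would give \(I_2\) two left neighbours.
\end{itemize}
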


This is \cite[Lemma~3.1]{McConville2018}.

\section{Proof}
\subsection{Facial intervals in terms of difference graphs}

We first characterize facial intervals by their difference graphs.

For \(p\in[n]\), let \(E_p=\{pq:q\neq p\}\) be the full star at \(p\).

\begin{proposition}\label{prop:facial-difference}
Let \(G < H\). An interval \([G,H]\) is facial if and only if \(H\setminus G\) is one of
the following:
\begin{enumerate}
    \item a nonempty matching;
    \item a full star \(E_p\);
    \item the complete graph \(K_n\).
\end{enumerate}
\end{proposition}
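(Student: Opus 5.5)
We prove both directions by translating between non-uniform extensions and sign labelings of the vertices \(ij\) via the Localization theorem (Proposition~\ref{prop:localization}). We also use the fact from Proposition~\ref{prop:lattice} that the order on \(B(n,n-3)\) is inclusion of coconsistent graphs. A non-uniform extension of one of the three types yields a labeling \(\sigma\) with values \(+,0,-\). Its uniform refinements are exactly the uniform extensions whose labelings agree with \(\sigma\) off the zero set \(Z=\sigma^{-1}(0)\). So the refinements are the coconsistent graphs \(G'\) with \(G\subseteq G'\subseteq G\cup Z\), where \(G=\sigma^{-1}(-)\). The facial interval is therefore \([G,G\cup Z]\) with difference \(Z\), and we must identify the possible zero sets \(Z\) and show the converse.

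\emph{Facial \(\Rightarrow\) difference of the listed form.} We treat the three types in turn.
\begin{enumerate}
\item In type~1 the pseudoline passes through old vertices, no two on a common old line. So \(Z\) is a set of edges \(ij\), no two sharing an endpoint, i.e.\ a matching. It is nonempty because the extension is non-uniform.
\item In type~2 the new element is parallel to line \(p\), so every vertex on \(p\) gets \(0\) and no other vertex does. Hence \(Z=E_p\).
\item In type~3 (the loop) every vertex gets \(0\), so \(Z=K_n\).
\end{enumerate}
In each case we must also check that \(G\) and \(G\cup Z\) are indeed the minimum and maximum refinements and that every coconsistent graph in between refines \(\sigma\). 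Both follow from the Localization theorem applied line by line: on line \(i\) the labeling reads \(-\cdots-0+\cdots+\) (or similar). Refining the single \(0\) to \(-\) or \(+\) keeps a consistent form, so every intermediate coconsistent graph is a refinement.

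\emph{Difference of the listed form \(\Rightarrow\) facial.} This is the harder direction. Given coconsistent \(G\subset H\) with \(D=H\setminus G\) a matching, a star \(E_p\), or \(K_n\), we define \(\sigma\) by \(-\) on \(G\), \(0\) on \(D\), and \(+\) elsewhere. We then verify the Localization criterion on each line \(i\).
\begin{itemize}
\item If \(D=K_n\), then \(G=\emptyset\) and \(H=K_n\), and \(\sigma\) is identically zero: the loop.
\item If \(D=E_p\), then on line \(p\) everything is \(0\), which is allowed. On a line \(i\neq p\) exactly one vertex, \(ip\), is \(0\). Coconsistency of both \(G\) and \(H=G\cup\{ip\}\) at \(i\) says the neighbourhood of \(i\) is an initial or final segment both with and without \(p\). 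This forces \(p\) to sit at the boundary between the \(-\) part and the \(+\) part, giving the form \(-\cdots-0+\cdots+\) or its reverse.
\item If \(D\) is a matching, each line \(i\) contains at most one \(0\), and the same boundary argument applies.
\end{itemize}
The main obstacle is this boundary argument. The neighbourhoods \(N_G(i)\) and \(N_H(i)\) are segments of the order \(1<\cdots<\widehat{i}<\cdots<n\) differing by one element, but one might be initial and the other final. We handle this by noting that a set that is simultaneously initial and final is empty or full, and then doing a short case check. A further subtlety is that the Localization theorem must be read on each line in the geometric order of its vertices, which for the cyclic arrangement is the natural order of the other index, possibly reversed. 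We will fix this convention using Proposition~\ref{prop:ziegler-extension}.

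Once \(\sigma\) is known to be an extension, it is non-uniform of the corresponding type, since \(D\neq\emptyset\). By the first part its refinements form exactly \([G,H]\), which is therefore facial.
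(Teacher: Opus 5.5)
Your proof is correct and follows essentially the same route as the paper. The converse labels $G$ negative, $H\setminus G$ zero and the rest positive, then applies the Localization theorem line by line. The forward direction reads off the zero set of the non-uniform extension: you do this type by type, whereas the paper argues that every vertex of $H\setminus G$ has degree $0$, $1$ or $n-1$, and your explicit case check for when one of $N_G(i)$, $N_H(i)$ is an initial segment and the other a final segment fills in a step the paper leaves implicit.
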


\begin{proof}
Suppose first that \([G,H]\) is the set of uniform extensions refining a
non-uniform extension. The edges of \(H\setminus G\) are the old vertices on
the new pseudoline, or, in the loop case, all old vertices. On every old line there are either no such vertices, one
such vertex, or all of them. Thus every vertex of \(H\setminus G\) has degree
\(0\), \(1\), or \(n-1\).

If no vertex has degree \(n-1\), the graph is a matching. If exactly one
vertex has degree \(n-1\), the graph is its full star. If two vertices have
degree \(n-1\), then every vertex has degree \(n-1\), so the graph is \(K_n\).

Conversely, suppose that \(H\setminus G\) is a matching, a full star, or
\(K_n\). Label an old vertex negative if it belongs to \(G\), zero if it
belongs to \(H\setminus G\), and positive otherwise. Since \(G\) and \(H\)
are coconsistent, on every old line the negative vertices and the
negative-or-zero vertices are beginning or ending segments. Moreover, the
number of zero vertices on each old line is \(0\), \(1\), or all. Proposition~\ref{prop:localization} therefore gives a non-uniform extension with this
labeling.

Its uniform refinements are precisely the coconsistent graphs \(F\) with
\(G\subseteq F\subseteq H\). Hence \([G,H]\) is facial.
\end{proof}

\subsection{Possible ascent-blocks and atom configurations}

We now apply McConville's ascent-block lemma in corank \(3\), using the complementary graph
language for ascents. In this language, McConville's adjacency is the usual
adjacency of edges: two ascents are adjacent if and only if the corresponding
edges share a vertex.

\begin{proposition}\label{prop:block-types}
Every ascent-block is of one of the following forms:
\begin{enumerate}
    \item a \(1\)-block \(\{ij\}\);
    \item a \(2\)-block
    \[
    \{12,1n\},\qquad
    \{1p,pn\}\quad (1<p<n),\qquad
    \{1n,(n-1)n\};
    \]
    \item the unique \(3\)-block
    \[
    \{12,1n,(n-1)n\}.
    \]
\end{enumerate}
Different blocks are vertex-disjoint.
\end{proposition}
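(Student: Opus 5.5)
We translate Proposition~\ref{prop:ascent-block} into the complementary graph language for \(d=n-3\), so that \(d+1=n-2\) and \(d+2=n-1\). An ascent is an \((n-2)\)-subset \(\{a_1,\dots,a_{n-2}\}\), and it corresponds to the edge \(ij\) formed by the two omitted elements. Two ascents are adjacent when their intersection has size \(d=n-3\), that is, when their union has size \(n-1\). In complementary terms this means the two edges share exactly one vertex, which gives the adjacency statement made before the proposition. The same dictionary gives the final claim: ascents in different blocks satisfy \(|I\cap J|<d\), which means the corresponding edges are disjoint. Hence distinct blocks are vertex-disjoint.

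Next we bound the size of a block. A block of size \(r\) is built from a chain \(a_1<\cdots<a_{d+r}\), so it needs \(d+r\le n\), which forces \(r\le 3\). We then compute the edges of a block in each case.
\begin{itemize}
\item If \(r=1\), the block is a single ascent \(\{ij\}\).
\item If \(r=3\), the chain is all of \([n]\). The ascents omit the complements of \(\{a_1,\dots,a_{n-2}\}\), \(\{a_2,\dots,a_{n-1}\}\) and \(\{a_3,\dots,a_n\}\), namely \(\{n-1,n\}\), \(\{1,n\}\) and \(\{1,2\}\). This gives the block \(\{12,1n,(n-1)n\}\).
\item If \(r=2\), the chain misses exactly one element \(p\in[n]\). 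The two ascents \(\{a_1,\dots,a_{n-2}\}\) and \(\{a_2,\dots,a_{n-1}\}\) then omit \(\{p,a_{n-1}\}\) and \(\{p,a_1\}\).
\end{itemize}

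The \(r=2\) case splits according to the position of \(p\).
\begin{itemize}
\item If \(p=1\), then \(a_1=2\) and \(a_{n-1}=n\), so the block is \(\{1n,12\}\).
\item If \(p=n\), then \(a_1=1\) and \(a_{n-1}=n-1\), so the block is \(\{(n-1)n,1n\}\).
\item If \(1<p<n\), then \(a_1=1\) and \(a_{n-1}=n\), so the block is \(\{1p,pn\}\).
\end{itemize}
These three cases are exactly the listed \(2\)-blocks, which completes the enumeration.

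The main obstacle is bookkeeping rather than content. We must get the orientation of the packet and complement correct, and we must check that the adjacency translation matches McConville's convention. Everything else is a direct specialization of Proposition~\ref{prop:ascent-block}. We do not need to prove that every listed form actually occurs. The statement only asserts that every block has one of these forms.
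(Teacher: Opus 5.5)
Your proposal is correct and follows essentially the same route as the paper. You specialize McConville's ascent-block lemma to \(d=n-3\), bound the block size via \(d+r\le n\), enumerate the \(3\)-block and the \(2\)-blocks by the element \(p\) missing from the support, and translate \(|I\cap J|<n-3\) into vertex-disjointness of the complementary edges. Your explicit union-size computation for the adjacency dictionary is just a slightly more detailed version of the paper's one-line remark.
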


\begin{proof}
By Proposition~\ref{prop:ascent-block}, the \((n-2)\)-subsets in a block are consecutive
subsets of their ordered support. A block with \(r\) elements has support of
size \(n-3+r\), so \(r\leq 3\).

There is only one possible \(3\)-block. Its support is \([n]\), so the block consists
of \( \{1,\ldots,n-2\} \),
\(\{2,\ldots,n-1\}\),
\(\{3,\ldots,n\}\). These correspond to the edges \((n-1)n\), \(1n\), and \(12\).

A \(2\)-block is determined by an \((n-1)\)-element support
\([n]\setminus\{p\}\). If \(p=n\), the two consecutive subsets are
\( \{1,\ldots,n-2\}, \{2,\ldots,n-1\} \),
and the corresponding edges are \((n-1)n\) and \(1n\). If \(p=1\), the
corresponding edges are \(12\) and \(1n\). If \(1<p<n\), they are \(1p\)
and \(pn\). 

Finally, a \(1\)-block is just one edge.

Proposition~\ref{prop:ascent-block} also says that ascents in different blocks intersect in
fewer than \(n-3\) elements. Two edges sharing a vertex
correspond to \((n-2)\)-subsets intersecting in exactly \(n-3\) elements.
Therefore edges in different blocks are vertex-disjoint.
\end{proof}

We now classify the possible configurations of atoms in one interval. Let \(A(G,H)=\{e\in H\setminus G:G\cup\{e\}\text{ is coconsistent}\}\); these edges index the atoms of \([G,H]\), and \(A(G,H)\subseteq\operatorname{Asc}(G)\).

\begin{proposition}\label{prop:atom-types}
The set \(A(G,H)\) is one of the following:
\begin{enumerate}
    \item a matching;
    \item a \(2\)-block together with a possibly empty disjoint matching;
    \item the \(3\)-block \(\{12,1n,(n-1)n\}\).
\end{enumerate}
\end{proposition}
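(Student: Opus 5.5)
Since every atom \(G\cup\{e\}\) of \([G,H]\) is in particular an ascent of \(G\), we have \(A(G,H)\subseteq\operatorname{Asc}(G)\). By Proposition~\ref{prop:block-types}, \(A(G,H)\) is therefore a disjoint union of its intersections with the ascent-blocks of \(G\), and these blocks are pairwise vertex-disjoint. The plan is to list the possible intersections of \(A(G,H)\) with a single block, and then see how such pieces can combine.

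\textbf{Step 1: one block at a time.} A subset of a \(1\)-block is empty or a single edge. A subset of a \(2\)-block is empty, a single edge, or the whole \(2\)-block. The proper subsets of the \(3\)-block \(\{12,1n,(n-1)n\}\) are of three kinds:
\begin{itemize}
\item single edges;
\item the pair \(\{12,(n-1)n\}\), which is a matching because \(n\ge 4\);
\item the pairs \(\{12,1n\}\) and \(\{1n,(n-1)n\}\), which are themselves \(2\)-blocks in the list of Proposition~\ref{prop:block-types}.
\end{itemize}
So each piece is a matching, a \(2\)-block, or the full \(3\)-block.

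\textbf{Step 2: at most one non-matching piece.} Every \(2\)-block and the \(3\)-block contains the edge \(1n\). Since distinct blocks are vertex-disjoint, at most one block can contribute a \(2\)-block or the \(3\)-block. All remaining pieces are matchings on vertices disjoint from it and from each other, so their union is a matching disjoint from it. This gives cases (1) and (2), and leaves case (3) with a possible disjoint matching attached.

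\textbf{Step 3: the \(3\)-block has no companions.} This is the main obstacle, because vertex-disjointness alone does not forbid extra edges next to the \(3\)-block. I would show directly that if \(12\), \(1n\) and \((n-1)n\) are all ascents of \(G\), then \(\operatorname{Asc}(G)\) is exactly the \(3\)-block.

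First, look at vertex \(1\), with neighbourhood \(N\) in \(G\). Both \(N\cup\{2\}\) and \(N\cup\{n\}\) must be beginning or ending segments of \(2<\cdots<n\), with \(2,n\notin N\). This forces either \(N=\emptyset\) or \(N=\{3,\dots,n-1\}\). The symmetric argument at vertex \(n\), using \(1n\) and \((n-1)n\), gives either no neighbours or \(\{2,\dots,n-2\}\).

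Next, propagate these constraints through coconsistency at the other vertices. I expect this to pin \(G\) down to \(\emptyset\) together with at most a few analogous extreme graphs. For \(G=\emptyset\), an edge \(ij\) is an ascent iff \(j\) is first or last in the order for \(i\), and \(i\) is first or last in the order for \(j\). For \(n\ge 5\) this leaves only \(12\), \(1n\) and \((n-1)n\). In the remaining cases I would check by hand that every other candidate edge violates the segment condition at one of its endpoints.

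If this global identification turns out to be messy, the fallback is a more local argument. Take a hypothetical extra ascent \(uv\) with \(u,v\notin\{1,2,n-1,n\}\). Then show that the neighbourhood of \(u\) cannot be a segment both with and without \(v\), given the forced adjacencies of \(u\) to \(1\) and to \(n\) determined above.

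Together with Steps 1 and 2, this yields exactly the three cases of the statement.
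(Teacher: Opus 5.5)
Your Steps 1 and 2 are correct and follow the paper's reasoning: every non-matching piece contains \(1n\), and distinct blocks are vertex-disjoint. Your plan for Step 3 is also the paper's plan: show that the \(3\)-block forces \(G=\varnothing\), then compute \(\operatorname{Asc}(\varnothing)\).

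The gap is that Step 3, which you rightly call the main obstacle, is not carried out. You stop at ``\(N=\varnothing\) or \(N=\{3,\dots,n-1\}\)''. You then only \emph{expect} that propagation leaves \(\varnothing\) ``together with at most a few analogous extreme graphs'', to be checked by hand.

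The missing idea is that \(G\) itself is coconsistent. So \(N(1)\) must already be a beginning or ending segment of \(2<\cdots<n\). For \(n\ge 4\) the set \(\{3,\dots,n-1\}\) is nonempty and contains neither \(2\) nor \(n\), so it is excluded. Hence \(N(1)=\varnothing\), and symmetrically \(N(n)=\varnothing\). Now an edge \(ij\in G\) with \(1<i<j<n\) would give \(i\) a neighbourhood containing \(j\) but neither \(1\) nor \(n\), which is not a segment. So \(G=\varnothing\) outright, and there are no other extreme graphs. Your computation of \(\operatorname{Asc}(\varnothing)\) then finishes the proof. It gives exactly the \(3\)-block already for \(n=4\), not only for \(n\ge5\).

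Your fallback does not repair this by itself. It relies on ``the forced adjacencies of \(u\) to \(1\) and to \(n\)'', but your analysis left these undetermined, with two options at each end. Suppose, say, \(u\) were adjacent to \(n\) but not to \(1\). Taking \(u<v\) and \(N(u)=\{v+1,\dots,n\}\), both \(N(u)\) and \(N(u)\cup\{v\}\) are ending segments, so no contradiction arises at \(u\) (nor at \(v\)). The local argument only works once these mixed options are ruled out, which is exactly the observation above.
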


\begin{proof}
By Proposition~\ref{prop:block-types}, if at most one edge is taken from each block, then the atoms form a matching,
since different blocks are vertex-disjoint.

Every \(2\)-block uses both vertices \(1\) and \(n\), so two \(2\)-blocks cannot
coexist. A \(2\)-block can coexist only with disjoint \(1\)-blocks, which form a
matching. Two edges chosen from the \(3\)-block either form one of the
\(2\)-blocks or are disjoint.

Finally, suppose that the whole \(3\)-block is contained in
\(\operatorname{Asc}(G)\). Addability of \(12\) and \(1n\) means no edges of the type \(1i\) are in \(G\), and addability of \(1n\) and \((n-1)n\) means no edges of the type \(in\) are in \(G\). An edge \(ij\), with \(1<i<j<n\), would make the set of neighbors of \(i\) in \(G\) contain \(j\) but neither \(1\) nor \(n\), so it would be neither a
beginning nor an ending segment. Hence \(G=\varnothing\). Since
\(\operatorname{Asc}(\varnothing)=\{12,1n,(n-1)n\}\), the \(3\)-block cannot
coexist with any other ascent.
\end{proof}

\subsection{Crosscut complex computations}
By Propositions~\ref{prop:lattice} and~\ref{prop:crosscut}, the homotopy type of \(\Delta(G,H)\) is the homotopy type of the crosscut complex \(\Gamma(G,H)\). We compute it in the three cases of Proposition~\ref{prop:atom-types}.

\subsubsection{Case 1: A matching}

\begin{lemma}\label{lem:matching-join}
Suppose that \(A(G,H)\) is a matching. Then, for every
\(S\subseteq A(G,H)\),
\[
\bigvee_{e\in S}(G\cup\{e\})=G\cup S.
\]
\end{lemma}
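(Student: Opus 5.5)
Since \(B(n,n-3)\) is ordered by inclusion (Proposition~\ref{prop:lattice}), any upper bound of the atoms \(G\cup\{e\}\), \(e\in S\), contains \(G\cup S\). So it suffices to show that \(G\cup S\) is itself coconsistent. Then it is an element of the poset lying above every \(G\cup\{e\}\) and contained in every common upper bound, hence it is the join. Note that \(G\cup S\subseteq H\), since \(S\subseteq A(G,H)\subseteq H\setminus G\), so the join also lies in \([G,H]\).

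To check coconsistency of \(G\cup S\), fix a vertex \(i\in[n]\) and compare its neighborhood in \(G\cup S\) with its neighborhood in \(G\). Because \(S\) is a matching, \(i\) lies on at most one edge of \(S\).
\begin{itemize}
\item If \(i\) lies on no edge of \(S\), its neighborhood is unchanged from \(G\), and this is a beginning or ending segment because \(G\) is coconsistent.
\item If \(i\) lies on exactly one edge \(e=ij\in S\), its neighborhood in \(G\cup S\) equals its neighborhood in \(G\cup\{e\}\). This graph is coconsistent because \(e\in A(G,H)\).
\end{itemize}
Hence every vertex satisfies the condition of Definition~\ref{def:coconsistent}. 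The key point is that coconsistency is a vertex-local condition, so adding edges with disjoint endpoints never creates a conflict at any vertex.

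The case \(S=\varnothing\) gives the empty join \(G\), which is consistent with the formula. The only step needing care is the justification that the join is computed by union. This is exactly where the inclusion-order part of Proposition~\ref{prop:lattice} is used: without it, a coconsistent \(G\cup S\) might not be comparable to the \(G\cup\{e\}\) in the order. I do not expect any step to be a real obstacle.
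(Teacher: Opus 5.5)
Your proof is correct and matches the paper's argument. Because \(S\) is a matching, each vertex's neighborhood in \(G\cup S\) coincides with its neighborhood in \(G\) or in some \(G\cup\{e\}\), so \(G\cup S\) is coconsistent and hence is the join; you also spell out, via the inclusion-order part of Proposition~\ref{prop:lattice}, why coconsistency suffices for \(G\cup S\) to be the join, which the paper leaves implicit.
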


\begin{proof}
Since \(S\) is a matching, at most one edge of \(S\) is incident with each
vertex. Thus, at every vertex, the neighborhood in \(G\cup S\) is either the same as in \(G\) or the same as in \(G\cup\{e\}\) for some \( e \in S \). Hence \(G\cup S\) is coconsistent, and therefore it is the join.
\end{proof}

\begin{corollary}\label{cor:matching-case}
If \(H=G\cup A(G,H)\), then the crosscut complex \(\Gamma(G,H)\) is the boundary
of a simplex, and \([G,H]\) is a facial interval. Otherwise the crosscut complex
is a full simplex, and \([G,H]\) is contractible.
\end{corollary}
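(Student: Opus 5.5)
By Lemma~\ref{lem:matching-join}, for every \(S\subseteq A(G,H)\) the join of the corresponding atoms is \(G\cup S\). We therefore only need to compare \(G\cup S\) with \(H\). Since \(S\subseteq H\setminus G\), we always have \(G\cup S\le H\), where the order is inclusion by Proposition~\ref{prop:lattice}. Moreover, \(G\cup S=H\) holds exactly when \(S=H\setminus G\). The crosscut complex is then
\[
\Gamma(G,H)=\{S\subseteq A(G,H): G\cup S\neq H\}.
\]
By Proposition~\ref{prop:crosscut}, this complex has the homotopy type of \(\Delta(G,H)\).

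\emph{First case: \(H=G\cup A(G,H)\).} Then \(G\cup S=H\) exactly when \(S=A(G,H)\). So \(\Gamma(G,H)\) consists of all proper subsets of the vertex set \(A(G,H)\), which is the boundary of a simplex. Here \(A(G,H)\) is nonempty because \(G<H\): some chain from \(G\) to \(H\) begins with an atom. The difference \(H\setminus G=A(G,H)\) is a nonempty matching, so Proposition~\ref{prop:facial-difference} shows that \([G,H]\) is facial. The boundary of a simplex on \(k\) vertices is \(S^{k-2}\); for \(k=1\) this is the empty complex \(S^{-1}\), consistent with the convention fixed earlier.

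\emph{Second case: \(H\neq G\cup A(G,H)\).} Then \(A(G,H)\subsetneq H\setminus G\), so no subset \(S\) of \(A(G,H)\) gives \(G\cup S=H\). Every subset of \(A(G,H)\) therefore lies in \(\Gamma(G,H)\), which is the full simplex on \(A(G,H)\). This simplex is contractible, since \(A(G,H)\neq\varnothing\). Hence \(\Delta(G,H)\) is contractible.

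The only point needing care is the conversion of "\(\bigvee S<J\)" into a set-theoretic condition. This uses the explicit join from Lemma~\ref{lem:matching-join} together with the fact that the order on \(B(n,n-3)\) is inclusion (Proposition~\ref{prop:lattice}). The rest is bookkeeping.

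Strictly speaking, "\([G,H]\) is contractible" needs \([G,H]\) not to be facial as well, so that it does not contradict Conjecture~\ref{conj:reiner}. This is automatic: a contractible complex is not homotopy equivalent to a sphere, and facial intervals are spherical. One can also check it directly. Suppose \(H\setminus G\) were a matching, a star \(E_p\), or \(K_n\), yet strictly larger than the matching \(A(G,H)\). This is compatible only with the star or complete-graph cases. The statement as posed only requires the contractibility conclusion, so I would not pursue this further here.
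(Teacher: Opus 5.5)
Your proof is correct and follows the paper's argument. Lemma~\ref{lem:matching-join} identifies the join of any set \(S\) of atoms with \(G\cup S\), so only \(S=A(G,H)\), and only when \(H=G\cup A(G,H)\), can have join \(H\); this gives the boundary of a simplex, with faciality from Proposition~\ref{prop:facial-difference}, and otherwise the full simplex. Your extra details (nonemptiness of \(A(G,H)\), the \(S^{-1}\) case) are details the paper leaves implicit, and your closing aside on non-faciality is not needed for the statement.
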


\begin{proof}
In the first case, every proper subset of the atoms has join below \(H\), while
the join of all atoms is \(H\). In this case \(H\setminus G\) is a matching, so the
interval is facial by Proposition~\ref{prop:facial-difference}.

In the second case, even the join of all atoms is below \(H\). Hence every subset
of the atoms is a face of the crosscut complex.
\end{proof}

\subsubsection{Case 2: A 2-block plus a (possibly empty) matching of 1-blocks}

Suppose now that we are in the second case, where \(A(G,H)\) consists of a \(2\)-block \(P\)
and a (possibly empty) disjoint matching \(M\). We first prove a lemma about when \(P\) is a genuine 2-block of \(G\), not a part of the 3-block.

\begin{proposition}[Forcing lemma]
\label{prop:forcing}
Let \(P \) be a \(2\)-block of \(\operatorname{Asc}(G)\), and let \(J\) be the join of two atoms of \(P\). Then every ascent of \(G\) outside \(P\) lies below \(J\).
\end{proposition}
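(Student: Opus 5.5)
The plan is to work directly with coconsistent graphs and neighbourhood segments, using Proposition~\ref{prop:lattice} (order is inclusion, so ``below \(J\)'' means \(g\in J\)). First determine \(J\) up to a forced part. Write \(P=\{e,f\}\) with a common vertex \(v\): \(v=1\) for \(\{12,1n\}\), \(v=n\) for \(\{1n,(n-1)n\}\), and \(v=p\) for \(\{1p,pn\}\).

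Both \(e\) and \(f\) are addable to \(G\) at \(v\), and the two new neighbours of \(v\) lie at opposite ends of the ordered list \([n]\setminus\{v\}\) (they are \(2\) and \(n\), or \(1\) and \(n-1\), or \(1\) and \(n\)). A nonempty segment can be extended by a neighbour at only one end. Hence \(N_G(v)=\varnothing\). Any coconsistent graph containing \(e\) and \(f\) then has a segment at \(v\) containing both extreme positions, so it contains the full star \(E_v\). Consequently
\[
J\supseteq G\cup E_v .
\]
This is the first key step; it is a short argument from Definition~\ref{def:coconsistent}.

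Next fix another ascent \(g=ij\) of \(G\), outside \(P\). By Proposition~\ref{prop:block-types} it is vertex-disjoint from \(P\), so \(i,j\neq v\), and also \(i,j\) differ from the other endpoints of \(P\). We already know \(vi,vj\in J\). The goal is to show that coconsistency of \(J\) at \(i\) (or at \(j\)) forces \(ij\in J\). Addability of \(ij\) at \(i\) means \(N_G(i)\cup\{j\}\) is a segment, so \(j\) is the element immediately adjacent to the segment \(N_G(i)\), or \(N_G(i)=\varnothing\) and \(j\in\{\min,\max\}\). Now \(N_J(i)\) is a segment containing \(N_G(i)\cup\{v\}\). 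Whenever \(v\) lies on the far side of \(j\) from \(N_G(i)\) in the ordered list, \(j\) is sandwiched and hence forced. I will do the case split on the position of \(v\) relative to \(i,j\) and on whether \(N_G(i)\) is a beginning or an ending segment.

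The main obstacle is the residual cases, where \(v\) lies on the same side as \(N_G(i)\), or \(N_G(i)=\varnothing\), at both endpoints \(i\) and \(j\) simultaneously. Here I would first exploit the hypothesis that \(P\) is a genuine \(2\)-block. By the argument in Proposition~\ref{prop:atom-types}, \(G\neq\varnothing\) and the third edge of the \(3\)-block is not an ascent. This gives information about \(N_G\) at the endpoints \(1,n\) (or \(2\), \(n-1\)) of \(P\), which are also fully forced or heavily constrained in \(J\).

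If a direct sandwich still fails, I would propagate the forcing one step further. Adding \(E_v\) changes neighbourhoods at every \(w\neq v\), and coconsistency at the other endpoint \(u\) of \(e\) or \(f\) (e.g.\ \(u=1\) or \(n\)) may force additional edges \(ui\) or \(uj\) into \(J\). These in turn sandwich \(j\) in \(N_J(i)\).

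I expect this bookkeeping to close in finitely many configurations, since all blocks touch only the positions \(1,2,p,n-1,n\). Throughout I would use that the ordered list at \(i\) omits \(i\) itself, so ``adjacent'' means adjacent in \([n]\setminus\{i\}\).
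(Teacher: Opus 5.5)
Your first step and your sandwich mechanism are exactly the paper's: $v$ is isolated in $G$, $J\supseteq G\cup E_v$, the other ascent $ij$ is vertex-disjoint from $P$, and $ij$ is forced because $j$ lies between two elements of the segment $N_J(i)$.

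The proposal nevertheless stops short of a proof. You postpone ``residual cases'' (where $v$ is on the same side of $j$ as $N_G(i)$, or $N_G(i)=\varnothing$) to an unspecified propagation through the other endpoints of $P$, and you only say you expect the bookkeeping to close. The missing idea is that these residual cases are empty, and you already have every ingredient needed to see it.

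\begin{itemize}
\item Since $N_G(v)=\varnothing$, you have $v\notin N_G(i)$, and also $v\neq j$. So addability of $ij$ says that $N_G(i)\cup\{j\}$ is a nonempty beginning or ending segment of $[n]\setminus\{i\}$ that \emph{avoids} $v$; in particular it is not the whole list.
\item Every $2$-block uses both vertices $1$ and $n$, so $i,j\notin\{1,n\}$. Hence the two extremes of the list at $i$ are $1$ and $n$, neither equals $j$, and $N_G(i)=\varnothing$ is impossible.
\item If $N_G(i)\cup\{j\}$ is a beginning segment, then $1\in N_G(i)$, $j$ is the largest element of the segment, and the whole segment lies below $v$. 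The ending case is symmetric, with $n$ in place of $1$.
\end{itemize}

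So $N_G(i)$ and $v$ always lie on opposite sides of $j$, and therefore $j\in N_J(i)$. This is precisely the paper's proof. For instance, for $P=\{1p,pn\}$ and $a<b<p$ it derives $1a\in G$, so $N_J(a)\ni 1,p$ and hence $N_J(a)\ni b$.

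Your proposed fallback, via $G\neq\varnothing$ and the non-ascent of the third edge of the $3$-block, is therefore unnecessary. The hypothesis that $P$ is a whole block enters only through the vertex-disjointness of $ij$ from $P$, which you already invoke. That hypothesis is genuinely needed there. Take $G=\varnothing$ and $P=\{12,1n\}$, two edges of the $3$-block. Then $J=E_1$, which does not contain the remaining ascent $(n-1)n$, exactly because $(n-1)n$ meets $P$.
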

\begin{proof}
First let \(P=\{12,1n\}\). Addability of both edges implies that \(1\) is
isolated in \(G\), while coconsistency forces \(J\) to contain the full star
of \(1\). Let \(ab\) be any ascent of \(G\) outside \(P\). It belongs to a different block and is therefore disjoint from \(P\), so \(2<a<b<n\). Since \(ab\) is addable and \(1a\notin G\), coconsistency
implies that \(an\in G\). Now \(J\) contains both \(1a\) and \(an\), so it
also contains \(ab\).

Next let \(P=\{1p,pn\}\), where \(1<p<n\). Again, \(p\) is isolated in
\(G\), and \(J\) contains the full star of \(p\). Let \(ab\) be any ascent of \(G\) outside \(P\). It belongs to a different block and is therefore disjoint from \(P\).

First assume \(a<b<p\). The graph \(G\) contains neither \(ab\) nor \(ap\),
and \(ab\) is addable, so coconsistency implies that \(1a\in G\). Now \(J\)
contains both \(1a\) and \(ap\), so it contains \(ab\).

Next assume \(a<p<b\). Again \(G\) contains neither \(ab\) nor \(ap\), and
\(ab\) is addable, so coconsistency implies that \(an\in G\). Now \(J\)
contains both \(ap\) and \(an\), so it contains \(ab\).

The case \(p<a<b\) follows from the case \(a<b<p\) by reversing the order
on \([n]\). The case \(P=\{1n,(n-1)n\}\) similarly follows from
\(P=\{12,1n\}\).
\end{proof}

There is one exceptional possibility: \(P\) may consist of two adjacent edges selected from the \(3\)-block. In this case \(G=\varnothing\) and
\(\operatorname{Asc}(G)=\{12,1n,(n-1)n\}\). Since we are not in Case 3, the third edge is not an atom of \([G,H]\), and hence \(M=\varnothing\).

We can now compute \(\Gamma(G,H)\) in all cases. 

\begin{lemma}\label{lem:join-below-top}
Let \(J\) be the join of the two atoms of \(P\). If \(J<H\), then
\([G,H]\) is contractible.
\end{lemma}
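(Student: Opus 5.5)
We will show that if \(J<H\), the crosscut complex \(\Gamma(G,H)\) is a cone, hence contractible. By Propositions~\ref{prop:lattice} and~\ref{prop:crosscut}, \(\Delta(G,H)\) is then contractible as well. The atom set is \(A(G,H)=P\sqcup M\), with \(P=\{e,f\}\). We will show that the join of \emph{all} atoms is still strictly below \(H\). Then every subset of \(A(G,H)\) is a face of \(\Gamma(G,H)\), and \(\Gamma(G,H)\) is a full simplex.

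The key step is to compute \(\bigvee A(G,H)\). Every edge \(m\in M\) is an ascent of \(G\) outside \(P\). By the Forcing lemma (Proposition~\ref{prop:forcing}), \(G\cup\{m\}\le J\) for each such \(m\). Hence
\[
\bigvee A(G,H)=(G\cup\{e\})\vee(G\cup\{f\})\vee\bigvee_{m\in M}(G\cup\{m\})=J.
\]
Since \(J<H\) by assumption, the join of all atoms lies strictly below \(H\). So \(\Gamma(G,H)\) is the full simplex on \(A(G,H)\), which is nonempty and therefore contractible.

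It remains to handle the exceptional situation where \(P\) consists of two adjacent edges of the \(3\)-block. In that case we have already observed that \(G=\varnothing\) and \(M=\varnothing\). The join of all atoms is then just \(J\) itself, and the same argument applies verbatim, without invoking the Forcing lemma. The Forcing lemma as stated concerns ascents of \(G\) outside \(P\), and here the only such ascent is the third edge of the \(3\)-block. Even if it lay below \(J\), this would not matter, since it is not an atom.

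The only point requiring care is the join identity. Joins are taken in the lattice \(B(n,n-3)\), and since the order is inclusion, \(X\le J\) simply means \(X\subseteq J\); this is exactly what the Forcing lemma provides. I expect no genuine obstacle. The real work, namely the Forcing lemma and the classification of atoms, has already been done, and the main thing to verify is that the exceptional \(3\)-block subcase is covered, which the preceding remark ensures.
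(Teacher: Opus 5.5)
Your proposal is correct and follows essentially the same route as the paper. Both arguments use the Forcing lemma to place every atom of \(M\) below \(J\), conclude that the join of all atoms is \(J<H\), and deduce that the crosscut complex is a full simplex, with the \(M=\varnothing\) and exceptional \(3\)-block subcases needing no forcing.
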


\begin{proof}
If the matching is empty, then \(\Gamma(G,H)\) is a 1-simplex. If the matching is nonempty, then, by Proposition~\ref{prop:forcing}, every atom of the matching \(M\) lies below \(J\). Hence the
join of all atoms is \(J\). If \(J<H\), then every subset of the atoms has join
strictly below \(H\). Thus the crosscut complex is a full simplex, and hence is
contractible.
\end{proof}

Suppose now that \(J=H\). We have two cases to consider, when the matching is not empty and when the matching is empty. 

\begin{lemma}\label{lem:cone-case}
If \(J=H\) and \(M\neq\varnothing\), then \(\Gamma(G,H)\) is a cone and hence \([G,H]\) is contractible.
\end{lemma}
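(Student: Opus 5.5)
We will show that any atom coming from the matching \(M\) is a cone point of \(\Gamma(G,H)\). Write \(P=\{e_1,e_2\}\) and \(M=\{m_1,\ldots,m_k\}\) with \(k\ge 1\). Since \(J=H\), where \(J=(G\cup\{e_1\})\vee(G\cup\{e_2\})\), a set \(S\subseteq A(G,H)\) fails to be a face of \(\Gamma(G,H)\) exactly when \(\bigvee S=H\). We first bound the join of any set of atoms that avoids one of \(e_1,e_2\). By Proposition~\ref{prop:block-types}, the edges of \(M\) are vertex-disjoint from each other and from \(P\). So if \(S\) contains at most one edge of \(P\), then \(S\) is a matching, and the argument of Lemma~\ref{lem:matching-join} applies verbatim: at each vertex, the neighbourhood in \(G\cup S\) agrees with the neighbourhood in \(G\) or in \(G\cup\{e\}\) for a single \(e\in S\). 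Hence \(G\cup S\) is coconsistent and \(\bigvee S=G\cup S\).

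The key step is to show that \(G\cup S\neq H\) whenever \(S\) omits some edge of \(P\). Suppose \(e_1\notin S\). Then \(e_1\notin G\cup S\), but \(e_1\in H\) because \(e_1\) is an atom of \([G,H]\). So \(\bigvee S\neq H\), and \(S\) is a face. Consequently the only non-faces of \(\Gamma(G,H)\) are sets containing both \(e_1\) and \(e_2\).

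Now fix \(m\in M\); such an \(m\) exists since \(M\neq\varnothing\). We claim \(\Gamma(G,H)\) is a cone with apex \(m\), that is, \(S\in\Gamma\) implies \(S\cup\{m\}\in\Gamma\). If \(S\) omits an edge of \(P\), then so does \(S\cup\{m\}\), so it is a face by the previous paragraph. If \(S\supseteq P\), then \(\bigvee S\ge J=H\), so \(S\) was not a face in the first place. Hence \(\Gamma(G,H)\) is a cone over the link of \(m\), so it is contractible, and by Propositions~\ref{prop:lattice} and~\ref{prop:crosscut} the interval \([G,H]\) is contractible.

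The main thing to verify carefully is that the join of a set of atoms containing at most one edge of \(P\) really is just the union. This relies on the vertex-disjointness of different blocks, which holds when \(P\) is a genuine \(2\)-block. The exceptional case, where \(P\) consists of two edges of the \(3\)-block, forces \(M=\varnothing\) by the remark preceding Lemma~\ref{lem:join-below-top}, so it does not arise here.
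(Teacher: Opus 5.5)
Your proof is correct and follows essentially the same route as the paper: a set of atoms omitting an edge of \(P\) is a matching with join \(G\cup S\subsetneq H\), a set containing \(P\) has join \(H\), so any \(m\in M\) is a cone point. Two points you spell out are left implicit in the paper: the argument of Lemma~\ref{lem:matching-join} is applied only to the sub-matching \(S\), not to all of \(A(G,H)\), and the exceptional \(3\)-block case does not occur here because it forces \(M=\varnothing\).
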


\begin{proof}
A set of atoms has join \(H\) if and only if it contains both edges of \(P\).
Indeed, a set not containing both edges of \(P\) is a matching, so by Lemma~\ref{lem:matching-join} its join is its union with \(G\), which is strictly below \(H\).

Choose any \(e\in M\). If a set of atoms is a face of the crosscut complex, then
adding \(e\) does not make it contain both edges of \(P\). Thus \(e\) is a cone
point, so the crosscut complex is contractible.
\end{proof}

\begin{lemma}\label{lem:star-case}
If \(J=H\) and \(M=\varnothing\), then \(\Gamma(G,H)\) is
\(S^0\), and \([G,H]\) is facial.
\end{lemma}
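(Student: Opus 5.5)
With \(M=\varnothing\) the atoms of \([G,H]\) are exactly the two edges of \(P\), so the crosscut complex is easy to read off. The set of all atoms has join \(J=H\) and is therefore not a face. Each singleton \(G\cup\{e\}\), \(e\in P\), lies strictly below \(H\), because \(H\) contains both edges of \(P\). The empty set is a face. So \(\Gamma(G,H)\) consists of two isolated vertices, i.e. it is \(S^0\). By Propositions~\ref{prop:lattice} and~\ref{prop:crosscut}, \(\Delta(G,H)\simeq S^0\), and the interval is spherical.

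To prove that \([G,H]\) is facial, I will use Proposition~\ref{prop:facial-difference}, which reduces the claim to identifying \(H\setminus G\) as a full star. The key structural fact is that the two edges of \(P\) share a vertex \(p\):
\begin{itemize}
\item for \(P=\{12,1n\}\), \(p=1\);
\item for \(P=\{1p,pn\}\), \(p\) is the middle vertex;
\item for \(P=\{1n,(n-1)n\}\), \(p=n\).
\end{itemize}
Addability of both edges means \(p\) is isolated in \(G\): its neighborhood must be compatible with adding either the first or the last neighbor in the order. This was already noted in the proof of Proposition~\ref{prop:forcing}, and it covers the exceptional subcase \(G=\varnothing\) as well. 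Coconsistency of \(J\) at \(p\) then forces the neighbors of \(p\) in \(J\) to form an initial or terminal segment containing both the smallest and the largest element of \([n]\setminus\{p\}\). Hence \(J\) contains the full star \(E_p\), as also used in Proposition~\ref{prop:forcing}.

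The remaining step, which I expect to be the main obstacle, is to show that \(J=G\cup E_p\) exactly. It suffices to check that \(G\cup E_p\) is coconsistent, since it is contained in every coconsistent graph containing \(G\) and both edges of \(P\), and therefore equals the join.
\begin{itemize}
\item At \(p\), the neighborhood in \(G\cup E_p\) is everything, which is allowed.
\item At a vertex \(a\neq p\), the neighborhood in \(G\cup E_p\) is \(N_G(a)\cup\{p\}\), where \(N_G(a)\) is a segment avoiding \(p\).
\end{itemize}
So I must verify that adjoining \(p\) keeps \(N_G(a)\) a beginning or ending segment. I plan a case analysis on the type of \(P\), using the information that both edges of \(P\) are addable to \(G\).
\begin{itemize}
\item For \(p=1\), the vertex \(1\) is the first element of every other vertex's order, so \(N_G(a)\cup\{1\}\) is a beginning segment whenever \(N_G(a)\) is a beginning segment or empty. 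The danger is a nonempty ending segment at \(a\). I would rule this out, or else show that in that case \(N_G(a)\) together with \(1\) is everything, using coconsistency of \(G\cup\{12\}\) and \(G\cup\{1n\}\) at the vertices \(2\) and \(n\).
\item If this direct check fails, I would instead argue that \(J\setminus G\) equals \(E_p\) using the localization Proposition~\ref{prop:localization}. Label \(G\) negative, \(E_p\) zero, and the rest positive; this gives the "repeated copy of line \(p\)" extension, provided \(G\) is compatible on each line.
\item The middle case \(1<p<n\) is handled similarly. On each line \(a\), the vertex \(p\) sits between \(G\) and its complement: if \(a<p\), then \(1a\in G\) forces \(N_G(a)\) to be an initial segment stopping before \(p\), and symmetrically for \(a>p\).
\end{itemize}
Once \(H=J=G\cup E_p\), the difference \(H\setminus G=E_p\) is a full star, and Proposition~\ref{prop:facial-difference} shows that \([G,H]\) is facial.
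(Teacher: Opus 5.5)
Your computation of \(\Gamma(G,H)\) as \(S^0\) is fine, and reducing faciality to the coconsistency of \(G\cup E_p\) is correct. The gap is in how you plan to check that coconsistency. You use only that both edges of \(P\) are addable to \(G\), and the hypothesis \(M=\varnothing\) never enters the faciality argument. For \(1<p<n\) this cannot work, because the claim is false under those assumptions alone.

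Take \(n=5\), \(p=3\), \(G=\{14,15,25\}\). Its neighborhoods \(\{4,5\},\{5\},\varnothing,\{1\},\{1,2\}\) are all segments, vertex \(3\) is isolated, and both \(13\) and \(35\) are addable, so \(P=\{13,35\}\) is a \(2\)-block of \(\operatorname{Asc}(G)\). Yet in \(G\cup E_3\) the neighborhood of \(2\) is \(\{3,5\}\), which is not a segment of \(1<3<4<5\). The actual join of the two atoms is \(G\cup E_3\cup\{24\}\), and \(24\) is a further ascent of \(G\), which is exactly what \(M=\varnothing\) excludes.

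The same example refutes your middle-case bullet: \(2<p\) but \(12\notin G\), and \(N_G(2)=\{5\}\) is an ending segment that \(p\) does not extend. Your localization fallback fails for the same reason, since along line \(2\) the labeling reads \(+,0,+,-\). For \(p\in\{1,n\}\) your direct check does go through: for \(p=1\), addability at \(2\) and \(n\) forces \(G\in\{\varnothing,K_{[n]\setminus\{1\}}\}\), and symmetrically for \(p=n\).

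The missing step is to turn \(M=\varnothing\) into structural information about \(G\).
\begin{enumerate}
\item In the exceptional \(3\)-block case, \(G=\varnothing\) and \(H=E_p\) directly.
\item Otherwise, the forcing lemma says every ascent of \(G\) outside \(P\) lies below \(J=H\) and would be a third atom. Hence \(\operatorname{Asc}(G)=P\).
\item Since the order is inclusion, \(G\) is then maximal among coconsistent graphs with \(p\) isolated. A strictly larger such graph would be reached from \(G\) by single-edge additions, and the first added edge would be an ascent not incident to \(p\), hence not in \(P\).
\item With \(L=\{1,\dots,p-1\}\) and \(R=\{p+1,\dots,n\}\), a coconsistent graph with \(p\) isolated cannot contain both an edge inside \(L\) or \(R\) and an edge between \(L\) and \(R\).
\item So maximality forces \(G\) to be either the complete bipartite graph between \(L\) and \(R\), or \(K_L\cup K_R\). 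In both cases \(G\cup E_p\) is visibly coconsistent, and therefore equals \(J=H\).
\end{enumerate}
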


\begin{proof}
There are exactly two atoms, and their join is \(H\). Hence the crosscut
complex consists of two isolated vertices, so it is \(S^0\).

Let \(p\) be the common vertex of the two edges in \(P\), and let
\(E_p=\{pq:q\neq p\}\). If \(P\) comes from the \(3\)-block, then
\(G=\varnothing\) and \(H=E_p\).

Otherwise \(P\) is a \(2\)-block of \(\operatorname{Asc}(G)\). Then \(p\) is
isolated in \(G\). Moreover, \(G\) has no other ascent: by Proposition~\ref{prop:forcing}, any such ascent would lie below \(H\) and would give a third atom of
\([G,H]\). Thus \(G\) is maximal among coconsistent graphs in which \(p\) is
isolated.

Put \(L=\{1,\ldots,p-1\}\) and \(R=\{p+1,\ldots,n\}\). A coconsistent graph with isolated \(p\) cannot contain both an edge inside \(L\) or \(R\) and an
edge between \(L\) and \(R\). Indeed, suppose that \(i<j<p\), \(ij\in G\),
and \(a<p<b\), \(ab\in G\). If \(a=i\) or \(a=j\), coconsistency fails
immediately at that vertex. If \(a<j\), coconsistency at \(i\) forces
\(ia\in G\), and then coconsistency fails at \(a\). If \(a>j\),
coconsistency at \(b\) forces \(ib\in G\), and then coconsistency fails at
\(i\). The case of an edge inside \(R\) is symmetric.

It follows that \(G\) consists either of all edges between \(L\) and \(R\), or
of all edges inside \(L\) and inside \(R\). In either case \(G\cup E_p\) is coconsistent. Every common upper bound of the two atoms contains \(E_p\), so \(H=J=G\cup E_p\). Thus \(H\setminus G=E_p\), and \([G,H]\) is
facial by Proposition~\ref{prop:facial-difference}.
\end{proof}

\subsubsection{Case 3: The 3-block}

In the last case \(A(G,H)=\{12,1n,(n-1)n\}\), we have \(G=\varnothing\) and coconsistency forces \(H=K_n\), so \([G,H]\) is the full facial interval.

Corollary~\ref{cor:matching-case} and Lemmas~\ref{lem:join-below-top}, \ref{lem:cone-case}, and~\ref{lem:star-case} exhaust the atom configurations of Proposition~\ref{prop:atom-types}. In every nonfacial case the crosscut complex is contractible. Since the sphericity of facial intervals is known, this proves Conjecture~\ref{conj:reiner} in corank 3.

\begin{theorem}\label{thm:main}
Let \(n\geq 4\). An interval in \(B(n,n-3)\) is spherical if and only if it is
facial. Every nonfacial proper interval is contractible.
\end{theorem}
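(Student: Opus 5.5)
The theorem now only needs assembling from what is already proved. Let \([G,H]\) be an interval in \(B(n,n-3)\) with \(G<H\). By Proposition~\ref{prop:lattice}, \(B(n,n-3)\) is a lattice ordered by inclusion. Proposition~\ref{prop:crosscut} therefore gives \(\Delta(G,H)\simeq\Gamma(G,H)\), so it suffices to determine the crosscut complex. Proposition~\ref{prop:atom-types} puts the atom set \(A(G,H)\) into one of three configurations, and I will go through them in order.

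\emph{Matching.} Corollary~\ref{cor:matching-case} gives two outcomes. Either \(\Gamma\) is the boundary of a simplex and the interval is facial, or \(\Gamma\) is a full simplex and the interval is contractible. In the boundary case the atoms form a nonempty matching, since \(G<H\) forces at least one atom.

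\emph{A \(2\)-block \(P\) plus a matching \(M\).} Let \(J\) be the join of the two atoms of \(P\). If \(J<H\), Lemma~\ref{lem:join-below-top} gives contractibility. If \(J=H\) and \(M\ne\varnothing\), Lemma~\ref{lem:cone-case} gives a cone, hence contractibility. If \(J=H\) and \(M=\varnothing\), Lemma~\ref{lem:star-case} gives \(\Gamma\simeq S^0\) and shows the interval is facial. The exceptional situation where \(P\) consists of two edges of the \(3\)-block was already absorbed before these lemmas: there \(G=\varnothing\) and \(M=\varnothing\), so it falls under the same subcases.

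\emph{The \(3\)-block.} Here \(G=\varnothing\) and \(H=K_n\), so the interval is the whole poset, which is facial.

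This establishes a dichotomy: in every case the interval is either facial or has a contractible crosscut complex. One point needs care. The dichotomy says nothing about nonfacial intervals in the facial branches, so I must check that every interval labelled contractible really is nonfacial.

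\begin{itemize}
\item Suppose \(\Gamma\) is a full simplex on a nonempty vertex set. A facial interval has spherical homotopy type by the known half of Conjecture~\ref{conj:reiner}. A full simplex is contractible, hence not homotopy equivalent to any sphere \(S^k\) with \(k\ge -1\). So the interval is nonfacial.
\item Cones are handled the same way.
\end{itemize}

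Hence the contractible cases are exactly the nonfacial ones. Conversely, a facial interval is spherical, and a spherical interval cannot be contractible, so by the dichotomy it must be facial. Together these give both assertions of the theorem.

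The main obstacle is bookkeeping rather than new mathematics: confirming that the case split exhausts all possibilities, including the degenerate \(2\)-block inside the \(3\)-block. This is already handled in the text preceding the lemmas. A second small point is that contractible and spherical are mutually exclusive. This holds because \(S^{-1}=\varnothing\) is not contractible and no \(S^k\) with \(k\ge0\) is contractible. I expect no further difficulty.
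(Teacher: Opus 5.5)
Your proposal is correct and is essentially the paper's own argument: reduce to the crosscut complex via Proposition~\ref{prop:lattice} and the Crosscut Theorem, run through the atom configurations of Proposition~\ref{prop:atom-types} using Corollary~\ref{cor:matching-case} and Lemmas~\ref{lem:join-below-top}, \ref{lem:cone-case} and~\ref{lem:star-case}, and close with the known sphericity of facial intervals. Your extra check that no facial interval lands in a contractible branch is valid but not needed, because ``nonfacial $\Rightarrow$ contractible'' and ``spherical $\Rightarrow$ facial'' already follow from each branch being either proved facial or proved contractible.
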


\bibliographystyle{alpha-noeditors}
\bibliography{references}

\end{document}